\newtheorem{theorem}{Theorem}[section]
\newtheorem{lemma}[theorem]{Lemma}
\newtheorem{corollary}[theorem]{Corollary}
\newcommand{\R}{\mathbb R}
\newcommand\blfootnote[1]{%
	\begingroup
	\renewcommand\thefootnote{}\footnote{#1}%
	\addtocounter{footnote}{-1}%
	\endgroup
}
\DeclareMathOperator{\mgr}{mgr}
\title{The Maximal Generalised Roundness of Finite Metric Spaces}
\author{Gavin Robertson}
\begin{document}

\maketitle

\begin{abstract}
We provide two simplifications of S{\'a}nchez's formula for the maximum generalised roundness of a finite metric space.
\end{abstract}

\section{Introduction}

\blfootnote{MSC Primary 52C99; Secondary 43A35;}

The equivalent concepts of $p$-negative type and generalised roundness arose in embedding theory in the work of Schoenberg \cite{Schoenberg 2} and Enflo \cite{Enflo 1}. Let $(X,d)$ be a metric space and suppose that $p \ge 0$. Then
\begin{enumerate}
 \item $(X,d)$ is of $p$-negative type if
  \[ \sum_{i,j=1}^{n} d(x_i,x_j)^p \xi_i\xi_j \le 0 \]
for all finite collections $x_1,\dots,x_n \in X$ and $\xi_1,\dots,\xi_n \in \R$ with $\xi_1+\dots+\xi_n = 0$.
 \item $(X,d)$ has generalised roundness $p$ if
 	\[ \sum_{i,j=1}^{n}d(x_{i},x_{j})^{p}+d(y_{i},y_{j})^{p}\leq2\sum_{i,j=1}^{n}d(x_{i},y_{j})^{p}
 	\]
for all finite collections $x_{1},\dots,x_{n},y_{1},\dots,y_{n}\in X$.
\end{enumerate}
It was shown by Lennard, Tonge and Weston that the set of $p$ for which a metric space is of $p$-negative type is the same as the set for which it has generalised roundness $p$ (\cite{Lennard 1}, Theorem 2.4). In both cases this set is of the form $[0,\wp]$ or $[0,\infty)$. It therefore makes sense to define the maximal generalised roundness (or supremal $p$-negative type) of $(X,d)$ which we will denote $\mgr(X,d)$, or $\mgr(X)$ if the metric is understood.

The most classical result in this area is Schoenberg's embedding theorem \cite{Schoenberg 1} which states that a metric space embeds isometrically in a Hilbert space if and only if $\mgr(X,d) \ge 2$. But links have been found between this concept and many apparently unrelated properties of certain types of spaces. For example, any simple connected graph $G$ carries the path metric. Hjorth \textit{et al.} (\cite{Hjorth 1}, Corollary 7.2) showed that if $\mgr(G) < 1$ then $G$ must contain a cycle.

In general however, calculation of $\mgr(X,d)$ has been challenging, even for finite metric spaces. A major step forward was provided by S{\'a}nchez \cite{Sanchez 1}, who provided a formula for the maximal generalised roundness of a finite metric space in terms of its distance matrix.

In what follows, we shall always assume that $X=\{x_{0},x_{1},\dots,x_{n}\}$ is a finite metric space, with distance matrix $D = (d(x_i,x_j))_{i,j=0}^n$. For $p>0$, let $D_p$ denote the `$p$-distance matrix', $(d(x_i,x_j)^p)_{i,j=0}^n$. Also, let $\mathbbm{1}$ denote the column vector in $\R^{n+1}$ all of whose coordinates are $1$, and let $\langle\cdot,\cdot\rangle$ denote the standard inner product on $\R^{n+1}$. S{\'a}nchez (\cite{Sanchez 1}, Corollary 2.4) showed that if $\mgr(X)<\infty$ then
	\begin{equation}\label{Sanchez's Formula}
	\mgr(X) = \min\{p>0 \,:\, \text{$\det(D_p) = 0$ or $\langle D_{p}^{-1}\mathbbm{1},\mathbbm{1}\rangle$}=0\}.
	\end{equation}
At least for small finite metric spaces this enables one to numerically calculate $\mgr(X)$. For explicit computations of $\mgr(X)$ for certain finite metric spaces using this formula, see for example Theorem 3.1 of \cite{Sanchez 1} or Proposition 4.4 of \cite{Faver 1}.

The aim  of this note is to show that the above expression can be replaced by simpler ones which only involve the calculation of a single determinant. To state our results we require some more notation. For $p>0$ we define the Cayley-Menger matrix $M_{p}$ by
	\[ M_{p}=\begin{pmatrix} 0 & \mathbbm{1}^{T} \\ \mathbbm{1} & D_{p} \end{pmatrix}.
	\]
Note that the determinant of $M_{p}$ is nothing but the classical Cayley-Menger determinant of the metric space $(X,d^{p/2})$. We also define the Gramian $G_{p}=(g_{ij})_{i,j=1}^{n}$ to be the matrix with entries
	\[ g_{ij}=\frac{1}{2}(d(x_{i},x_{0})^{p}+d(x_{j},x_{0})^{p}-d(x_{i},x_{j})^{p}).
	\]
The most important result pertaining to the Gramian is the following refinement of Schoenberg's embedding theorem: the metric space $(X,d)$ embeds isometrically into $(\R^{m},\|\cdot\|_{2})$ if and only if $G_{2}$ is positive semidefinite with rank at most $m$. Also, we have the following fundamental relationship between the Cayley-Menger matrix and the Gramian. If $p>0$ then
	\begin{equation}\label{Cayley-Menger Formula}
	\det(G_{p})=\frac{(-1)^{n+1}}{2^{n}}\det(M_{p}).
	\end{equation}

For more on Cayley-Menger determinants and Gramians, including the above formula, see \cite{Maehara 1}.

We may now state our main result.

\begin{theorem}\label{Main Theorem}
	Let $(X,d)$ be a finite metric space with $\mgr(X)<\infty$. Then
		\[ \mgr(X)=\min\{p>0:\det(M_{p})=0\}=\min\{p>0:\det(G_{p})=0\}.
		\]
\end{theorem}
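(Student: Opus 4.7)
The Cayley-Menger identity (\ref{Cayley-Menger Formula}) immediately gives $\det(M_p)=0$ iff $\det(G_p)=0$, so the two expressions on the right of the theorem coincide and I focus on the first, $\mgr(X)=\min\{p>0:\det(M_p)=0\}$. The main computational tool will be the Schur complement identity applied to the block form of $M_p$: whenever $D_p$ is invertible,
\[ \det(M_p)=-\det(D_p)\,\langle D_p^{-1}\mathbbm{1},\mathbbm{1}\rangle. \]

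For $p<\mgr(X)$, S\'anchez's formula (\ref{Sanchez's Formula}) forces both $\det(D_p)\ne 0$ and $\langle D_p^{-1}\mathbbm{1},\mathbbm{1}\rangle\ne 0$, and hence $\det(M_p)\ne 0$ by the displayed identity. This gives $\min\{p>0:\det(M_p)=0\}\ge\mgr(X)$. For the reverse inequality, set $p^*=\mgr(X)$; by (\ref{Sanchez's Formula}), either $\det(D_{p^*})=0$ or $\langle D_{p^*}^{-1}\mathbbm{1},\mathbbm{1}\rangle=0$. In the second subcase the Schur identity again supplies $\det(M_{p^*})=0$, so the genuine content lies in the subcase $\det(D_{p^*})=0$.

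In that subcase, my plan is to pick a nonzero $v\in\ker(D_{p^*})$ and prove $v\in\mathbbm{1}^\perp$; once this is done, $\begin{pmatrix}0\\v\end{pmatrix}$ is a nonzero vector in $\ker(M_{p^*})$ and $\det(M_{p^*})=0$ follows. Writing $v=c\mathbbm{1}+v_\perp$ with $v_\perp\in\mathbbm{1}^\perp$, and using $D_{p^*}v=0$ together with the symmetry of $D_{p^*}$, a short computation yields
\[ v_\perp^T D_{p^*}v_\perp = c^2\,\mathbbm{1}^T D_{p^*}\mathbbm{1}. \]
The left side is $\le 0$ because $v_\perp\in\mathbbm{1}^\perp$ and $(X,d)$ has $p^*$-negative type (the negative type set being closed at its supremum). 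The right side is $\ge 0$, and $\mathbbm{1}^T D_{p^*}\mathbbm{1}=\sum_{i,j}d(x_i,x_j)^{p^*}>0$, which forces $c=0$ as required. This subcase is the only real obstacle: the Schur identity is unavailable, and one must exploit the $p^*$-negative type property of $X$ at the supremum rather than rely on pure linear algebra.
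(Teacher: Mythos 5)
Your proposal is correct, and its skeleton is the same as the paper's: the Schur complement identity $\det(M_p)=-\det(D_p)\langle D_p^{-1}\mathbbm{1},\mathbbm{1}\rangle$ (the paper's Lemma \ref{New Determinant}), the observation that (\ref{Sanchez's Formula}) makes both factors nonzero for $p<\mgr(X)$, the same two subcases at $p^*=\mgr(X)$, and the same nonzero vector $(0,v)\in\ker(M_{p^*})$ in the singular subcase. The one genuine point of divergence is how you establish $\ker(D_{p^*})\subseteq\Pi_0$: the paper simply quotes S\'anchez's dichotomy (his Corollary 2.5, Theorem \ref{Dichotomy} here), whereas you reprove it from scratch by decomposing $v=c\mathbbm{1}+v_\perp$ and computing $v_\perp^TD_{p^*}v_\perp=c^2\,\mathbbm{1}^TD_{p^*}\mathbbm{1}$ (correct: all cross terms vanish since $D_{p^*}v=0$ and $D_{p^*}$ is symmetric), then playing the $p^*$-negative type inequality on $\Pi_0$ against the strict positivity of $\mathbbm{1}^TD_{p^*}\mathbbm{1}$. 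This uses that the negative-type set is closed at its supremum, which the paper records in the introduction, and that $X$ has at least two points, which follows from $\mgr(X)<\infty$. What your variant buys is self-containedness: you only need S\'anchez's formula (\ref{Sanchez's Formula}) itself, not the additional kernel information in his Corollary 2.5, since you have in effect re-derived that half of the dichotomy. The cost is a slightly longer argument at the one point where the paper prefers a citation.
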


The difference between the expressions in the above theorem and that given in (\ref{Sanchez's Formula}) may seem small at first glance. However, it turns out that in many ways Theorem \ref{Main Theorem} fits into the theory of generalised roundness far more naturally than (\ref{Sanchez's Formula}). For a start, the formula in (\ref{Sanchez's Formula}) reduces the computation of $\mgr(X)$ to an analysis of $\det(D_{p})$ and $\langle D_{p}^{-1}\mathbbm{1},\mathbbm{1}\rangle$ as functions of $p$. The analysis of $\langle D_{p}^{-1}\mathbbm{1},\mathbbm{1}\rangle$ is almost always problematic since this expression is not necessarily even defined for all $p>0$. Also, even where it is defined it more or less requires the computation of $D_{p}^{-1}$, which is in most cases far from easy. The formulae in Theorem \ref{Main Theorem} do not have these problems, as they instead only require an analysis of the more well behaved quantities $\det(M_{p})$ and $\det(G_{p})$. That is to say, they only require an analysis of quantities which are defined for all $p>0$ and do not make use of the problematic $D_{p}^{-1}$.

To actually demonstrate the usefulness of Theorem \ref{Main Theorem} we remark that it can be used to give a much simpler proof of a theorem of Murugan (\cite{Murugan 1}, Theorem 4.3), concerning the maximal generalised roundness of subsets of the Hamming cube. This simpler proof is also based on the following identity from \cite{Doust 1}.

\begin{theorem}\label{New Determinant Formula}
	Let $X=\{x_{0},x_{1},\dots,x_{m}\}\subseteq(H_{n},\ell^{1})$ be a subset of the $n$-dimensional Hamming cube with $x_{0}=0$, and distance matrix $D_{1}$. Then if $B$ is the $m\times n$ matrix whose $i$-th row is $x_{i}$ for $1\leq i\leq m$, we have that
	\[ \det(M_{1})=(-1)^{m-1}2^{m}\det(BB^{T}).
	\]
\end{theorem}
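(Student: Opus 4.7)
The plan is to reduce this to the Cayley--Menger formula (\ref{Cayley-Menger Formula}) by identifying the Gramian $G_1$ with the matrix $BB^T$. Since the rows of $B$ are the points $x_1,\dots,x_m\in\{0,1\}^n$, the $(i,j)$ entry of $BB^T$ is simply the inner product $\langle x_i, x_j\rangle$, so the entire theorem will follow once I can show $G_1 = BB^T$.

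The key step is the identity
\[ d(x_i,x_j) = \|x_i\|_1 + \|x_j\|_1 - 2\langle x_i, x_j\rangle \]
for $x_i,x_j\in\{0,1\}^n$. I would derive this coordinate-wise from the observation that for $a,b\in\{0,1\}$, $|a-b|=a+b-2ab$, so summing over the $n$ coordinates gives the desired identity (using that the vectors are $0/1$-valued, so $\|x_i\|_1=\langle x_i,x_i\rangle$ and similarly for $x_j$). Combined with $x_0=0$, which gives $d(x_i,x_0)=\|x_i\|_1$, the entry $g_{ij}$ of the Gramian becomes
\[ g_{ij}=\tfrac{1}{2}\bigl(\|x_i\|_1+\|x_j\|_1-(\|x_i\|_1+\|x_j\|_1-2\langle x_i,x_j\rangle)\bigr)=\langle x_i,x_j\rangle, \]
so indeed $G_1=BB^T$.

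Having established this, I would invoke (\ref{Cayley-Menger Formula}) with $p=1$ applied to the $(m+1)$-point metric space $\{x_0,\dots,x_m\}$. This yields
\[ \det(BB^T)=\det(G_1)=\frac{(-1)^{m+1}}{2^m}\det(M_1), \]
and rearranging (noting $(-1)^{m+1}=(-1)^{m-1}$, so its reciprocal is also $(-1)^{m-1}$) produces
\[ \det(M_1)=(-1)^{m-1}2^m\det(BB^T), \]
which is the claim.

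The only potential obstacle is a purely bookkeeping one: making sure the index conventions in the Cayley--Menger formula (stated for $X=\{x_0,\dots,x_n\}$ in the excerpt) are translated correctly to our setting with $m+1$ points so that the sign factor $(-1)^{m+1}$ appears, and verifying that $(-1)^{m+1}=(-1)^{m-1}$ matches the sign in the statement. Beyond that, the argument is essentially a one-line substitution.
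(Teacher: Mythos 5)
Your proof is correct. Note that the paper itself does not prove Theorem \ref{New Determinant Formula}; it imports the identity from the preprint \cite{Doust 1}, so there is no in-paper argument to compare against. Your derivation is a clean, self-contained route: the coordinatewise identity $|a-b|=a+b-2ab$ for $a,b\in\{0,1\}$ gives $d(x_i,x_j)=\|x_i\|_1+\|x_j\|_1-2\langle x_i,x_j\rangle$, whence $g_{ij}=\langle x_i,x_j\rangle$ and $G_1=BB^T$, and then (\ref{Cayley-Menger Formula}) applied to the $(m+1)$-point space gives $\det(M_1)=(-1)^{m+1}2^m\det(G_1)=(-1)^{m-1}2^m\det(BB^T)$. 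The index translation you flag as the ``only potential obstacle'' is handled correctly: with $m+1$ points the Gramian is $m\times m$ and the sign is $(-1)^{m+1}=(-1)^{m-1}$, consistent with the statement (one can check $m=1$, $x_1=(1)$, where both sides equal $2$). This also fits naturally with the paper's overall theme, since it exhibits Theorem \ref{New Determinant Formula} as a special case of the general relation between $\det(M_p)$ and $\det(G_p)$ rather than as an independent determinant computation.
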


\begin{corollary}\label{Murugan's Theorem}\textit{(Murugan's Theorem).}
	A subset $X=\{x_{0},x_{1},\dots,x_{m}\}\subseteq (H_{n},\ell^{1})$ with $x_{0}=0$ has $\mgr(X)>1$ if and only if it is affinely independent. 
\end{corollary}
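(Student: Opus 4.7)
The plan is to chain together Theorem \ref{Main Theorem} and Theorem \ref{New Determinant Formula}, with the classical fact that $\mgr(X)\geq 1$ for every subset $X$ of $(\R^n,\ell^1)$ serving as the crucial input. In the present setting one can see this lower bound directly: for $X\subseteq H_n$ with $x_0=0$, the identity $d(x,y)=\|x\|_1+\|y\|_1-2\langle x,y\rangle$ (valid for binary vectors) gives $G_1=BB^T$, which is manifestly positive semidefinite, so $X$ is of $1$-negative type.

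Assuming first that $\mgr(X)<\infty$, Theorem \ref{Main Theorem} yields $\mgr(X)=\min\{p>0:\det(M_p)=0\}$, and combining with $\mgr(X)\geq 1$ gives $\det(M_p)\neq 0$ for every $p\in(0,1)$. Hence $\mgr(X)>1$ if and only if $\det(M_1)\neq 0$. By Theorem \ref{New Determinant Formula}, this is equivalent to $\det(BB^T)\neq 0$, which in turn is equivalent to $B$ having full row rank $m$, i.e.\ to linear independence of its rows $x_1,\dots,x_m$ in $\R^n$. Because $x_0=0$, this is precisely affine independence of $\{x_0,x_1,\dots,x_m\}$.

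The remaining case $\mgr(X)=\infty$ falls outside the hypothesis of Theorem \ref{Main Theorem}. Here $\mgr(X)>1$ holds trivially, so one only needs to verify that $X$ is affinely independent. The key observation is that if $\det(M_p)=0$ for some $p>0$, then via the Schur complement identity $\det(M_p)=-\det(D_p)\langle D_p^{-1}\mathbbm{1},\mathbbm{1}\rangle$ this $p$ also lies in the set appearing in S{\'a}nchez's formula \eqref{Sanchez's Formula}, forcing $\mgr(X)\leq p<\infty$. Thus $\mgr(X)=\infty$ forces $\det(M_1)\neq 0$, and Theorem \ref{New Determinant Formula} again concludes. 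The main obstacle is really just the bookkeeping for this edge case; the body of the argument is a clean reduction via the two cited theorems once the lower bound $\mgr(X)\geq 1$ is in hand.
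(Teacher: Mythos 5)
Your proof is correct and follows essentially the same route as the paper: the lower bound $\mgr(X)\geq 1$ for subsets of the Hamming cube, Theorem \ref{Main Theorem} to reduce the question to whether $\det(M_1)\neq 0$, and Theorem \ref{New Determinant Formula} to translate this into linear (hence, since $x_0=0$, affine) independence. Your explicit treatment of the $\mgr(X)=\infty$ case, which the paper silently skips, is a welcome extra; just note that the implication ``$\det(M_p)=0\Rightarrow\mgr(X)\leq p$'' requires the unconditional form of S\'anchez's result (or the characterisation of $\mgr$ via strict $p$-negative type), rather than the version of (\ref{Sanchez's Formula}) quoted in the paper, which is stated only under the hypothesis $\mgr(X)<\infty$.
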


For the details of this simpler proof of Corollary \ref{Murugan's Theorem}, see section 2.

Finally, we remark that Theorem \ref{Main Theorem} holds equally well for semi-metric spaces (metric spaces where we do not necessarily require that the triangle inequality holds).

\section{Technical Proofs}

We start by proving a formula concerning the product $\det(D_{p})\langle D_{p}^{-1}\mathbbm{1},\mathbbm{1}\rangle$.
	
\begin{lemma}\label{New Determinant}
	If $D_{p}$ is invertible, then
	\[ \det(M_{p})=-\det(D_{p})\langle D_{p}^{-1}\mathbbm{1},\mathbbm{1}\rangle.
	\]
\end{lemma}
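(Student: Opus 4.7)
The plan is to recognise the identity as a straightforward application of the Schur-complement determinant formula for block matrices. Writing
\[ M_{p}=\begin{pmatrix} 0 & \mathbbm{1}^{T} \\ \mathbbm{1} & D_{p} \end{pmatrix},
\]
I view this as a $2\times 2$ block matrix with the invertible block $D_{p}$ in the bottom right. The standard formula
\[ \det\begin{pmatrix} A & B \\ C & D \end{pmatrix}=\det(D)\det(A-BD^{-1}C)
\]
applied with $A=0$ (a $1\times 1$ block), $B=\mathbbm{1}^{T}$, $C=\mathbbm{1}$, $D=D_{p}$ then yields
\[ \det(M_{p})=\det(D_{p})\det\bigl(-\mathbbm{1}^{T}D_{p}^{-1}\mathbbm{1}\bigr),
\]
and since $\mathbbm{1}^{T}D_{p}^{-1}\mathbbm{1}=\langle D_{p}^{-1}\mathbbm{1},\mathbbm{1}\rangle$ is a scalar, this is exactly the stated formula.

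If a more self-contained derivation is preferred, one can instead apply the block LU-factorisation directly: multiply $M_{p}$ on the left by
\[ \begin{pmatrix} 1 & -\mathbbm{1}^{T}D_{p}^{-1} \\ 0 & I_{n+1} \end{pmatrix},
\]
which is unimodular and clears the top-right block, leaving a block upper-triangular matrix whose diagonal blocks are the $1\times 1$ scalar $-\langle D_{p}^{-1}\mathbbm{1},\mathbbm{1}\rangle$ and the $(n+1)\times(n+1)$ matrix $D_{p}$. Taking determinants gives the identity.

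There is no real obstacle here; the only point of care is the sign, which comes from the $A=0$ entry in the top-left producing $\det(-\mathbbm{1}^{T}D_{p}^{-1}\mathbbm{1})$ rather than $\det(\mathbbm{1}^{T}D_{p}^{-1}\mathbbm{1})$, accounting for the minus sign in the statement.
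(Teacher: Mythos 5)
Your proof is correct and follows essentially the same route as the paper: the paper verifies an explicit block factorisation of $M_{p}$ whose determinant evaluates to $-\det(D_{p})\langle D_{p}^{-1}\mathbbm{1},\mathbbm{1}\rangle$, which is precisely the Schur-complement identity you invoke. The only quibble is in your second variant: clearing the top-right block leaves a block \emph{lower}-triangular matrix, not an upper-triangular one, though this does not affect the determinant computation.
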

\begin{proof}
	Let $I$ denote the identity matrix of size $n+1$ and $0_{n+1}$ the vector in $\R^{n+1}$ all of whose coordinates are $0$. It is a simple matter to verify the matrix identity (in block form)
		\[ \begin{pmatrix} 0 & \mathbbm{1}^{T} \\ \mathbbm{1} & D_{p} \end{pmatrix}=\begin{pmatrix} 1 & \mathbbm{1}^{T} \\ 0_{n+1} & D_{p} \end{pmatrix}\begin{pmatrix} -\mathbbm{1}^{T}D_{p}^{-1}\mathbbm{1} & 0_{n+1}^{T} \\ D_{p}^{-1}\mathbbm{1} & I \end{pmatrix}.
		\]
	The result now follows by taking the determinant of both sides and noticing that
		\[ -\mathbbm{1}^{T}D_{p}^{-1}\mathbbm{1}=-\langle D_{p}^{-1}\mathbbm{1},\mathbbm{1}\rangle.
		\]
\end{proof}

	We also require the following result of S\'{a}nchez (\cite{Sanchez 1}, Corollary 2.5).

\begin{theorem}\label{Dichotomy}
		Let $(X,d)$ be a finite metric space with $\wp=\mgr(X)<\infty$. Also, let $\Pi_{0}$ be the plane in $\R^{n+1}$ with equation $\xi_{0}+\xi_{1}+\dots+\xi_{n}=0$. Then we have the following dichotomy.
		\begin{enumerate}
			\item Either $\det(D_{\wp})=0$, in which case for any $\xi\in\ker(D_{\wp})$ we have $\xi\in\Pi_{0}$, or else
			\item $\det(D_{\wp})\neq 0$ and $\langle D_{\wp}^{-1}\mathbbm{1},\mathbbm{1}\rangle=0$.
		\end{enumerate}
	\end{theorem}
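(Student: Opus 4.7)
The plan is to extract, at the critical exponent $p=\wp$, a nonzero vector $\xi_{0}\in\Pi_{0}$ on which the quadratic form $Q_{p}(\xi):=\xi^{T}D_{p}\xi$ vanishes, and then to apply Lagrange multipliers to split into the two cases of the dichotomy.

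First, I would establish the existence of such a $\xi_{0}$ by a continuity-and-compactness argument. Since the set of $p\geq 0$ for which $X$ is of $p$-negative type is the closed interval $[0,\wp]$, the restriction $Q_{\wp}|_{\Pi_{0}}$ is negative semidefinite, while for every $p>\wp$ the maximum of $Q_{p}$ on the unit sphere of $\Pi_{0}$ is strictly positive. Joint continuity of $(p,\xi)\mapsto Q_{p}(\xi)$ together with compactness of this sphere implies that $f(p):=\max\{Q_{p}(\xi):\xi\in\Pi_{0},\|\xi\|=1\}$ is continuous. Combining $f(\wp)\leq 0$ with $f(p)>0$ for $p$ slightly above $\wp$ yields $f(\wp)=0$, attained at some unit $\xi_{0}\in\Pi_{0}$. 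Since $\xi_{0}$ maximises $Q_{\wp}$ on the subspace $\Pi_{0}=\mathbbm{1}^{\perp}$, the gradient $2D_{\wp}\xi_{0}$ must lie in $\Pi_{0}^{\perp}=\mathrm{span}(\mathbbm{1})$, so $D_{\wp}\xi_{0}=\lambda\mathbbm{1}$ for some $\lambda\in\R$.

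With this in place I would split on whether $D_{\wp}$ is invertible. If $\det(D_{\wp})\neq 0$, then $\xi_{0}=\lambda D_{\wp}^{-1}\mathbbm{1}$; since $\xi_{0}\neq 0$ we have $\lambda\neq 0$, and then $\mathbbm{1}^{T}\xi_{0}=0$ immediately forces $\langle D_{\wp}^{-1}\mathbbm{1},\mathbbm{1}\rangle=0$. If $\det(D_{\wp})=0$, I need to show $\ker(D_{\wp})\subseteq\Pi_{0}$. Suppose, for contradiction, that some $\eta\in\ker(D_{\wp})$ satisfies $\mathbbm{1}^{T}\eta=1$. Any $\zeta\in\R^{n+1}$ decomposes as $\zeta=\zeta'+(\mathbbm{1}^{T}\zeta)\eta$ with $\zeta':=\zeta-(\mathbbm{1}^{T}\zeta)\eta\in\Pi_{0}$; using $D_{\wp}\eta=0$, one obtains $\zeta^{T}D_{\wp}\zeta=\zeta'^{T}D_{\wp}\zeta'\leq 0$, which would make $D_{\wp}$ negative semidefinite on all of $\R^{n+1}$. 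But for $i\neq j$ one has $(e_{i}+e_{j})^{T}D_{\wp}(e_{i}+e_{j})=2d(x_{i},x_{j})^{\wp}>0$, a contradiction. The main obstacle is really just cleanly verifying the existence of $\xi_{0}$ at $p=\wp$; once that step is in hand, the rest is essentially linear algebra.
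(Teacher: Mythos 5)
This statement is not proved in the paper at all: it is imported verbatim as Corollary 2.5 of S\'anchez's article, so there is no in-paper argument to compare against. Your proposal is a correct, self-contained proof, and it is worth recording why it works. The variational step is sound: since the $p$-negative type interval is $[0,\wp]$, the function $f(p)=\max\{\xi^{T}D_{p}\xi:\xi\in\Pi_{0},\ \|\xi\|=1\}$ satisfies $f(\wp)\leq 0$ and $f(p)>0$ for $p>\wp$, and continuity (uniform continuity of $(p,\xi)\mapsto\xi^{T}D_{p}\xi$ on a compact set) forces $f(\wp)=0$; the passage from ``maximiser on the unit sphere of $\Pi_{0}$ with value $0$'' to ``unconstrained maximiser on the subspace $\Pi_{0}$'' is justified precisely because $Q_{\wp}\leq 0$ on all of $\Pi_{0}$, and then the first-order condition $D_{\wp}\xi_{0}\in\Pi_{0}^{\perp}=\mathrm{span}(\mathbbm{1})$ is the standard fact that a quadratic form which is $\leq 0$ on a subspace and vanishes at $\xi_{0}$ has $D_{\wp}\xi_{0}$ orthogonal to that subspace. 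The two ensuing cases are handled correctly: invertibility of $D_{\wp}$ gives $\langle D_{\wp}^{-1}\mathbbm{1},\mathbbm{1}\rangle=0$ from $\mathbbm{1}^{T}\xi_{0}=0$ and $\lambda\neq 0$, while a kernel vector outside $\Pi_{0}$ would propagate negative semidefiniteness from $\Pi_{0}$ to all of $\R^{n+1}$, contradicting $(e_{i}+e_{j})^{T}D_{\wp}(e_{i}+e_{j})=2d(x_{i},x_{j})^{\wp}>0$ (note the hypothesis $\mgr(X)<\infty$ guarantees $n\geq 1$, so such $i\neq j$ exist). One small point worth making explicit if you write this up: failing $p$-negative type for the finite set $X$ is equivalent to the existence of $\xi\in\Pi_{0}$ with $\xi^{T}D_{p}\xi>0$, which requires grouping repeated points in the definition; this is routine but is the step that lets you work with the fixed matrix $D_{p}$ throughout.
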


We may now prove Theorem \ref{Main Theorem}.
	
\begin{proof}(Of Theorem \ref{Main Theorem})
		We will prove the formula concerning $\det(M_{p})$. The second part of the formula that pertains to $\det(G_{p})$ then immediately follows from this and (\ref{Cayley-Menger Formula}). So, first suppose that $0<p<\wp=\mgr(X)$. Then by (\ref{Sanchez's Formula}) we know that
		\[ \det(D_{p})\neq0\mbox{ and }\langle D_{p}^{-1}\mathbbm{1},\mathbbm{1}\rangle\neq 0.
		\]
		But then Lemma \ref{New Determinant} clearly implies that
		\[ \det(M_{p})\neq0.
		\]
		Now all that we need to show is that the above determinant is zero at $p=\wp$. To do so, we just consider both of the cases in Theorem \ref{Dichotomy}. The second case is easier. Indeed, suppose it is the case that
		\[ \det(D_{\wp})\neq 0\mbox{ and }\langle D_{\wp}^{-1}\mathbbm{1},\mathbbm{1}\rangle=0.
		\]
		Then Lemma \ref{New Determinant} again clearly implies that $\det(M_{\wp})=0.$ Now suppose instead that $\det(D_{\wp})=0.$ Then $D_{\wp}$ is not invertible and so there exists a nonzero $\xi=(\xi_{0},\xi_{1},\dots,\xi_{n})\in\ker(D_{\wp})$. By Theorem \ref{Dichotomy} we have that $\xi\in\Pi_{0}$. We claim that $\eta=(0,\xi)=(0,\xi_{0},\xi_{1},\dots,\xi_{n})\in\R^{n+2}$ is an element of the kernel of $M_{\wp}$. Indeed, since $\xi\in\Pi_{0}$ we have that
		\[ M_{\wp}\eta=\begin{pmatrix} 0 & \mathbbm{1}^{T} \\ \mathbbm{1} & D_{\wp} \end{pmatrix}\begin{pmatrix} 0 \\ \xi \end{pmatrix}=\begin{pmatrix} \sum_{i=0}^{n}\xi_{i} \\ D_{\wp}\xi \end{pmatrix}=\begin{pmatrix} 0 \\ 0_{n+1} \end{pmatrix}.
		\]
		But since $\xi$ is nonzero it follows that $\eta$ is also nonzero. Hence we conclude that
		\[ \det(M_{\wp})=0
		\]
		since we have found a nonzero element in the kernel of $M_{\wp}$.
\end{proof}

Next we give the proof of Corollary \ref{Murugan's Theorem}.

\begin{proof}(Of Corollary \ref{Murugan's Theorem})
	Since all subsets of the Hamming cube have $\mgr(X)\geq 1$ (\cite{Lennard 1}, Corollary 2.6) Theorem \ref{Main Theorem} implies that
	\[ \det(M_{p})\neq0
	\]
	for all $0<p<1$. Keeping the notation from Theorem \ref{New Determinant Formula}, it is a basic fact of linear algebra that $x_{1},\dots,x_{m}$ are linearly independent if and only if $\det(BB^{T})\neq0$ (see \cite{Gantmacher 1} for a classical treatment of such results). Thus by Theorem \ref{New Determinant Formula} we have that $\det(M_{1})\neq0$ if and only if $x_{1},\dots,x_{m}$ are linearly independent. Hence Theorem \ref{Main Theorem} implies that $\mgr(X)>1$ if and only if $x_{1},\dots,x_{m}$ are linearly independent. Since $x_{0}=0$, the result follows.
\end{proof}

\section*{Acknowledgements}
The author would like to thank the following organisations for his Research Training Program scholarship: the Department of Education and Training, Australian Government and the School of Mathematics and Statistics, UNSW. The author would also like to thank Ian Doust for his help and supervision received whilst putting together this article.

\end{document}